\newtheorem{theorem}{theorem}[section]
\newtheorem{thm}[theorem]{Theorem}
\newtheorem{lem}[theorem]{Lemma}
\newtheorem{rmk}[theorem]{Remark}
\newtheorem{nota}[theorem]{Notation}
\begin{document}

\title{\textbf{The automorphism group of a nonsplit metacyclic 2-group}}
\author{\Large Haimiao Chen
\footnote{Email: \emph{chenhm@math.pku.edu.cn}}  \\
\normalsize \em{Mathematics, Beijing Technology and Business University, Beijing, China}}
\date{}
\maketitle

\begin{abstract}
  We determine the structure of automorphism group or each nonsplit metacyclic 2-group. This completes the work on automorphism groups of metacyclic $p$-groups.

  \medskip
  \noindent {\bf Keywords:}  automorphism group, nonsplit, metacyclic 2-group. \\
  {\bf MSC 2010:} 20D45.
\end{abstract}

\section{Introduction}

A {\it metacyclic group} is an extension of a cyclic group by another cyclic group.
As is well-known (see Section 3.7 of \cite{Zas56}), each metacyclic group can be presented as
\begin{align}
H(n,m;t,r)=\langle \alpha,\beta\mid \alpha^{n}=1, \ \beta^{m}=\alpha^{t}, \ \beta\alpha\beta^{-1}=\alpha^{r}\rangle. \label{eq:presentation}
\end{align}
for some positive integers $n,m,r,t$ with $r^{m}-1\equiv t(r-1)\equiv 0\pmod{n}$.

In recent years, people have been studying automorphism groups of metacyclic $p$-groups. The order and the structure of ${\rm Aut}(G)$ when $G$ is a split metacyclic $p$-group were found in \cite{BC06,Cur07}; when $G$ is a nonsplit metacyclic $p$-group with $p\neq 2$, ${\rm Aut}(G)$ was determined in \cite{Cur08}.

In this paper, we deal with nonsplit metacyclic 2-groups, based on \cite{CXZ17} in which we derived explicit formulas for automorphisms of a general metacyclic group.
According to Theorem 3.2 of \cite{Kin73}, each nonsplit metacyclic 2-group is isomorphic to one of the following:
\begin{enumerate}
  \item[\rm(I)] $H_I(a,b,c,d):=H(2^{a}, 2^{b}; 2^{c}, 2^{d}+1)$ for a unique quadruple
        $(a,b,c,d)$ with
        \begin{align}
        d>1  \qquad \text{and}  \qquad \max\{d,a-d-1\}<c<\min\{a,b\},   \label{ineq:ordinary}
        \end{align}
  \item[\rm(II)] $H_{II}(a,b,e):=H(2^{a}, 2^{b}; 2^{a-1}, 2^{e}-1)$ for a unique triple $(a,b,e)$ with
        \begin{align}
        \max\{1,a-b\}<e<\min\{a,b\},    \label{ineq:exceptional}
        \end{align}
  \item[\rm(III)] $H_{III}(a):=H(2^{a}, 2; 2^{a-1}, 2^{a}-1)$ for a unique $a>1$.
\end{enumerate}
We do not deal with case (III), in which $G$ is a {\it generalized quaternion group}, and the automorphism group was determined in \cite{ZZ05}. Denote $r=2^{d}+1$ in case (I), and $r=2^{e}-1, c=a-1, d=1$ in case (II). Denote $H(2^{a},2^{b};2^{c},r)$ simply as $H$ whenever there is no ambiguity.


\begin{nota}
\rm For an integer $N>0$, let $\mathbb{Z}_{N}$ denote $\mathbb{Z}/N\mathbb{Z}$, and  regard it as a quotient ring of $\mathbb{Z}$.
Let $\mathbb{Z}_N^{\times}$ denote the multiplicative group of units.
For $u\in\mathbb{Z}$, denote its image under the quotient $\mathbb{Z}\twoheadrightarrow\mathbb{Z}_{N}$ also by $u$.

For integers $u,s>0$, let $[u;s]=1+s+\cdots+s^{u-1}$ and $[0;s]=0$.

For any $k,v\in\mathbb{Z}$, by $r^k$ we mean $r^{\check{k}}$, where $\check{k}\in\{0,1,\ldots,2^{b}-1\}$ denotes the remainder when dividing $k$ by $2^b$; by $[v;r^k]$ we mean $[\check{v};r^{\check{k}}]$. For $w\in\mathbb{Z}_{2^a}$, by $\alpha^w$ we mean $\alpha^{\tilde{w}}$ where $\tilde{w}$ is any integer mapped to $w$ by $\mathbb{Z}\twoheadrightarrow\mathbb{Z}_{2^a}$; similarly, $\beta^w$ for $w\in\mathbb{Z}_{2^b}$ makes sense.

For an integer $u\ne 0$, let $\|u\|$ denote the largest integer $s$ with $2^{s}\mid u$; set $\|0\|=+\infty$.

Use $\exp_{\alpha}(u)$ to denote $\alpha^{u}$ when the expression for $u$ is too long.

Throughout the paper, we abbreviate $A\equiv B\pmod{2^a}$ to $A\equiv B$ as often as possible.
\end{nota}

\section{Preparation}

It follows from (\ref{eq:presentation}) that
\begin{align}
(\alpha^{u_{1}}\beta^{v_{1}})(\alpha^{u_{2}}\beta^{v_{2}})&=\alpha^{u_{1}+u_2r^{v_{1}}}\beta^{v_{1}+v_{2}},\label{eq:identity1} \\
(\alpha^{u}\beta^{v})^{k}&=\alpha^{u[k;r]}\beta^{vk}. \label{eq:identity2}
\end{align}

For the following two lemmas, see \cite{CXZ17} Lemma 2.1--2.4.
\begin{lem} \label{lem:simplify}
If $s>1$ with $\|s-1\|=\ell>1$ and $x>0$ with $\|x\|=u>0$, then
$$[x;s]\equiv (1+2^{\ell-1})x\pmod{2^{\ell+u}},  \qquad s^{x}-1\equiv (s-1+2^{2\ell-1})x\pmod{2^{2\ell+u}}.$$
\end{lem}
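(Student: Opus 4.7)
My plan is to write $s = 1 + 2^{\ell}m$ with $m$ odd and use the binomial expansion $s^k = \sum_{j\ge 0}\binom{k}{j}2^{j\ell}m^j$ throughout. The identity $j\binom{x}{j} = x\binom{x-1}{j-1}$ then gives the valuation bound $\|\binom{x}{j}\| \ge u - \|j\|$, which is the only number-theoretic input needed.

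For the second congruence, I expand
$$s^x - 1 = \sum_{j\ge 1}\binom{x}{j}2^{j\ell}m^j.$$
The $j=1$ term is $x\cdot 2^{\ell}m = x(s-1)$. The $j=2$ term equals $\binom{x}{2}2^{2\ell}m^2 = x(x-1)\cdot 2^{2\ell-1}m^2$; since $u\ge 1$ makes $x-1$ odd, and $m$ is odd, the factor $(x-1)m^2$ is an odd integer, so this term is congruent to $2^{2\ell-1}x \pmod{2^{2\ell+u}}$ (any odd multiple of $2^{2\ell-1+u}$ reduces to $2^{2\ell-1+u}$ modulo $2^{2\ell+u}$). For $j\ge 3$ the $j$-th term has 2-adic valuation at least $j\ell + u - \|j\|$, and the inequality $(j-2)\ell\ge\|j\|$ follows from $\ell\ge 2$ and $\|j\|\le\log_2 j$, so these terms vanish modulo $2^{2\ell+u}$.

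For the first congruence, I interchange summations and use the hockey-stick identity:
$$[x;s] = \sum_{k=0}^{x-1}s^k = \sum_{j\ge 0}2^{j\ell}m^j\sum_{k=j}^{x-1}\binom{k}{j} = \sum_{j\ge 0}\binom{x}{j+1}2^{j\ell}m^j.$$
The $j=0$ term is $x$. The $j=1$ term is $\binom{x}{2}2^{\ell}m = x(x-1)\cdot 2^{\ell-1}m$, which by the same odd-factor argument is $\equiv 2^{\ell-1}x \pmod{2^{\ell+u}}$. For $j\ge 2$, the valuation of the $j$-th term is at least $j\ell + u - \|j+1\|$, and the required inequality $(j-1)\ell\ge\|j+1\|$ is again immediate from $\ell\ge 2$.

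The only mild obstacle is keeping the 2-adic bookkeeping tidy; in particular, one must record that $\ell\ge 2$ is needed to ensure $2^{\ell-1}$ is integral and to drive the exponential growth that kills the tail, and $u\ge 1$ is needed so that $x-1$ is odd (otherwise the $j=1,2$ contributions cannot be read off directly and extra terms would survive). Everything else is routine.
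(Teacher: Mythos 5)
Your proof is correct. The key valuation bound $\|\binom{x}{j}\|\ge u-\|j\|$ via $j\binom{x}{j}=x\binom{x-1}{j-1}$, the hockey-stick collapse $[x;s]=\sum_{j\ge 0}\binom{x}{j+1}2^{j\ell}m^j$, and the observation that the $j=1,2$ terms are odd multiples of $2^{\ell-1+u}$ and $2^{2\ell-1+u}$ respectively (hence congruent to $2^{\ell-1}x$ and $2^{2\ell-1}x$ modulo the stated moduli) all check out, as do the tail estimates $(j-1)\ell\ge\|j+1\|$ and $(j-2)\ell\ge\|j\|$ for $\ell\ge 2$. Note, however, that this paper does not actually prove the lemma: it defers to Lemmas 2.1--2.4 of \cite{CXZ17}, so there is no in-paper argument to compare against. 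Your binomial-expansion proof is a legitimate self-contained substitute; it also makes explicit exactly where the hypotheses $\ell>1$ and $u>0$ enter (the former to kill the tail, the latter to make $x-1$ odd so the second-order terms can be read off), which is a useful clarification since both congruences genuinely fail without them.
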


\begin{lem}  \label{lem:well-defined}
There exists $\sigma\in{\rm Aut}(H)$ with
$\sigma(\alpha)=\alpha^{x_{1}}\beta^{y_{1}}$, $\sigma(\beta)=\alpha^{x_{2}}\beta^{y_{2}}$ if and only if
\begin{align}
2^{b-d}\mid y_1  \label{ineq:deg-y1} \\
x_{2}[2^{b};r^{y_{2}}]+2^cy_{2}-x_{1}[2^{c};r^{y_{1}}]-\frac{2^{c}y_{1}}{2^b}2^c\equiv 0, \label{eq:condition2}\\
(r^{y_{1}}-1)x_2+([r;r^{y_{1}}]-r^{y_{2}})x_1+\frac{(r-1)y_1}{2^b}2^c\equiv 0,    \label{eq:condition3}  \\
2\nmid x_{1}y_{2}-x_{2}y_{1}.  \label{eq:condition0}
\end{align}
\end{lem}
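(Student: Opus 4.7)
The plan is to invoke the universal property of the presentation (\ref{eq:presentation}): the assignment $\sigma(\alpha)=\alpha^{x_1}\beta^{y_1}$, $\sigma(\beta)=\alpha^{x_2}\beta^{y_2}$ extends to a well-defined endomorphism of $H$ if and only if the images satisfy the three defining relations (R1) $\sigma(\alpha)^{2^a}=1$, (R2) $\sigma(\beta)^{2^b}=\sigma(\alpha)^{2^c}$, and (R3) $\sigma(\beta)\sigma(\alpha)\sigma(\beta)^{-1}=\sigma(\alpha)^r$. Since $H$ is finite, such an endomorphism is an automorphism iff it is surjective. I would therefore expand each relation via (\ref{eq:identity1})--(\ref{eq:identity2}), reduce both sides to the normal form $\alpha^i\beta^j$ (with $0\le i<2^a$, $0\le j<2^b$) by means of $\beta^{2^b}=\alpha^{2^c}$, and match exponents.

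For (R3), the left side unfolds by (\ref{eq:identity1}) to $\alpha^{x_2(1-r^{y_1})+x_1r^{y_2}}\beta^{y_1}$, while the right side by (\ref{eq:identity2}) is $\alpha^{x_1[r;r^{y_1}]}\beta^{y_1 r}$. Writing $y_1r=y_1+q\cdot 2^b$ and reducing $\beta^{y_1 r}=\alpha^{2^cq}\beta^{y_1}$, matching $\beta$-exponents forces $2^b\mid y_1(r-1)$, which, since $\|r-1\|=d$ in both cases (I) and (II), is precisely (\ref{ineq:deg-y1}); substituting $q=y_1(r-1)/2^b$ and matching $\alpha$-exponents yields (\ref{eq:condition3}). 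For (R2), $\sigma(\beta)^{2^b}=\alpha^{x_2[2^b;r^{y_2}]+2^cy_2}$ follows from $\beta^{y_2\cdot 2^b}=(\beta^{2^b})^{y_2}=\alpha^{2^cy_2}$; similarly $\sigma(\alpha)^{2^c}$ reduces to $\alpha^{x_1[2^c;r^{y_1}]+(2^cy_1/2^b)2^c}$ using $2^{b-c}\mid y_1$ (implied by (\ref{ineq:deg-y1}) because $c>d$). Equating then gives (\ref{eq:condition2}).

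The subtle step is to show (R1), which does not appear explicitly in the statement and must therefore be deduced from (\ref{ineq:deg-y1}). I would expand $\sigma(\alpha)^{2^a}=\alpha^{x_1[2^a;r^{y_1}]}\beta^{y_1\cdot 2^a}$. The $\beta$-part collapses to $\alpha^{2^{c+a-b}y_1}$ using (\ref{ineq:deg-y1}), whose exponent is $0$ mod $2^a$ because $2^{b-c}\mid y_1$. For the remaining factor I would invoke Lemma \ref{lem:simplify} with $s=r^{y_1}$ and $x=2^a$: the hypothesis $\|r^{y_1}-1\|=d+\|y_1\|>1$ holds whenever $y_1\ne 0$ (in case (I), $d\ge 2$ already suffices; in case (II), $d=1$ but (\ref{ineq:deg-y1}) forces $\|y_1\|\ge b-1\ge 1$), and the lemma delivers $[2^a;r^{y_1}]\equiv 0\pmod{2^a}$. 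The trivial case $y_1=0$ is immediate. This verification, in particular the case split and the handling of case (II) where Lemma \ref{lem:simplify} is only just applicable, is what I expect to be the main obstacle.

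Finally, since $H$ is a finite $2$-group, a well-defined endomorphism is an automorphism iff it is surjective. By Burnside's basis theorem this is equivalent to $\sigma$ inducing a bijection on the Frattini quotient $H/\Phi(H)\cong\mathbb{Z}_2\times\mathbb{Z}_2$, with basis $\{\bar\alpha,\bar\beta\}$; the associated matrix is $\bigl(\begin{smallmatrix}x_1 & x_2\\ y_1 & y_2\end{smallmatrix}\bigr)\pmod 2$, and invertibility amounts to the determinant being odd, i.e., (\ref{eq:condition0}).
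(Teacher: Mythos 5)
The paper does not prove this lemma at all; it simply cites Lemmas 2.1--2.4 of \cite{CXZ17}. Your proposal supplies the standard direct verification that the cited reference carries out: apply von Dyck's theorem to the presentation (\ref{eq:presentation}), expand the three relations via (\ref{eq:identity1})--(\ref{eq:identity2}) and the reduction $\beta^{2^b}=\alpha^{2^c}$ (legitimate because $\alpha^{2^c}$ is central here, as $c+d\ge a$ in both cases), and characterize surjectivity by the Frattini quotient. Your exponent computations check out: matching the $\beta$-exponents in (R3) gives $2^b\mid y_1(r-1)$, i.e.\ (\ref{ineq:deg-y1}) since $\|r-1\|=d$ in both cases, and the $\alpha$-exponents give exactly (\ref{eq:condition3}); (R2) gives (\ref{eq:condition2}); and since $H$ is noncyclic, $H/\Phi(H)\cong\mathbb{Z}_2\times\mathbb{Z}_2$, so surjectivity is (\ref{eq:condition0}). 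One small inaccuracy worth fixing: in your treatment of (R1) you quote $\|r^{y_1}-1\|=d+\|y_1\|$, which is the lifting-the-exponent identity valid for $r\equiv 1\pmod 4$; in case (II) one has $r=2^e-1\equiv 3\pmod 4$ and the identity fails for odd exponents (e.g.\ $\|3^2-1\|=3\ne 2$). It does not damage the argument, because (\ref{ineq:deg-y1}) forces $y_1\in\{0,2^{b-1}\}$ there, and for the even value $y_1=2^{b-1}$ one gets $\|r^{y_1}-1\|=e+b-1>1$, so Lemma \ref{lem:simplify} still applies (or, more cheaply, $[2^a;s]=\prod_{i=0}^{a-1}(1+s^{2^i})\equiv 0\pmod{2^a}$ for any odd $s$, with no case split needed). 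With that repair the proof is complete and is, as far as one can tell, the same argument the paper outsources to \cite{CXZ17}.
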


Due to (\ref{ineq:deg-y1}), we may write $y_1=2^{b-d}y$.
The following two lemmas are special cases of \cite{CXZ17} Lemma 2.8 (iii) and (ii), respectively. Here we reprove them in a relatively succinct way. 
\begin{lem}
When $H=H_I(a,b,c,d)$, the conditions {\rm(\ref{eq:condition2})} and {\rm(\ref{eq:condition3})} hold if and only if
\begin{align}
x_{1}&\equiv 1+2^{b-c}x_{2}\pmod{2^{a-c}},  \label{eq:x12} \\
y_{2}&\equiv 1+(2^{c-d}+2^{c-1})y \pmod{2^{a-d}}.   \label{eq:y12}
\end{align}
\end{lem}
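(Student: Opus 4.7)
The plan is to compute each quantity in (\ref{eq:condition2}) and (\ref{eq:condition3}) modulo $2^a$ via Lemma \ref{lem:simplify}, then match with (\ref{eq:x12}) and (\ref{eq:y12}). Writing $y_1 = 2^{b-d}y$ (as permitted by (\ref{ineq:deg-y1})), the two ``offset'' terms become $\frac{2^c y_1}{2^b}\cdot 2^c = 2^{2c-d}y$ and $\frac{(r-1)y_1}{2^b}\cdot 2^c = 2^c y$; from (\ref{eq:condition0}) and the fact that $y_1$ is even, both $x_1$ and $y_2$ must be odd. Extracting from (\ref{ineq:ordinary}) the inequalities $b+c-1 \geq a$, $b+d-1 \geq a$, and $2c \geq a+1$, Lemma \ref{lem:simplify} then yields, modulo $2^a$, the congruences $[2^b; r^{y_2}] \equiv 2^b$, $[2^c; r^{y_1}] \equiv 2^c$, $r^{y_1} - 1 \equiv 2^b y$, and $[r; r^{y_1}] \equiv r$. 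Since Lemma \ref{lem:simplify} requires $\|x\| > 0$ and $y_2$ is odd, for $r^{y_2}$ I would instead factor $r^{y_2} = r \cdot r^{y_2-1}$ and apply the lemma to $r^{y_2-1}$, obtaining $r^{y_2} - r \equiv 2^d(1+2^{d-1})r(y_2-1) \pmod{2^a}$.

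Plugging these into (\ref{eq:condition2}) and dividing by $2^c$ yields the equivalent form
$$x_1 \equiv 2^{b-c}x_2 + y_2 - 2^{c-d}y \pmod{2^{a-c}}, \quad (\ast)$$
and analogously (\ref{eq:condition3}) becomes, after dividing by $2^d$,
$$(1+2^{d-1})r(y_2-1)x_1 \equiv y(2^{b-d}x_2 + 2^{c-d}) \pmod{2^{a-d}}. \quad (\ast\ast)$$
It remains to show that $(\ast)$ and $(\ast\ast)$ are jointly equivalent to (\ref{eq:x12}) and (\ref{eq:y12}).

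Assuming (\ref{eq:x12}) and (\ref{eq:y12}), substitute into $(\ast)$: the extra summand $2^{c-1}y$ is absorbed modulo $2^{a-c}$ since $c - 1 \geq a - c$. For $(\ast\ast)$, direct expansion reduces the claim to $[(1+2^{d-1})^2 r - 1]\cdot 2^{c-d}y(1+2^{b-c}x_2) \equiv 0 \pmod{2^{a-d}}$, which holds because a short case analysis on $d$ shows $\|(1+2^{d-1})^2 r - 1\| \geq d$ while $c \geq a - d$. Conversely, assuming $(\ast)$ and $(\ast\ast)$, I would invert $x_1$ and $(1+2^{d-1})r$ modulo $2^{a-d}$ to rewrite $(\ast\ast)$ as an explicit formula for $y_2 - 1 \pmod{2^{a-d}}$. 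Reducing this modulo $2^{a-c}$ and substituting into $(\ast)$ produces, after manipulation, an equation of the form $(x_1 - 1 - 2^{b-c}x_2)(x_1 + 2^{c-d}y) \equiv 2^{c-1}y(1 + 2^{b-c}x_2) \pmod{2^{a-c}}$; the right-hand side vanishes and $x_1 + 2^{c-d}y$ is a unit, which yields (\ref{eq:x12}). Reinserting (\ref{eq:x12}) into the formula for $y_2 - 1$ then gives (\ref{eq:y12}).

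The main obstacle will be the ``only if'' direction, where one must carefully track the two different moduli $2^{a-c}$ and $2^{a-d}$ (with $a-c < a-d$ since $c > d$) and the implicit inversions. The critical numerical ingredient is the identity $(1+2^{d-1})^2 r \equiv 1 \pmod{2^d}$, which is exactly what accounts for the correction term $2^{c-1}y$ appearing in (\ref{eq:y12}).
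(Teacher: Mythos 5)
Your route is essentially the paper's: use Lemma \ref{lem:simplify} together with $\|y_1\|\ge b-d$ and the consequences $b+d-1\ge a$, $b+c-1\ge a$, $2c\ge a+1$ of (\ref{ineq:ordinary}) to collapse (\ref{eq:condition2}) and (\ref{eq:condition3}) to your $(\ast)$ and $(\ast\ast)$ --- these are exactly the paper's (\ref{eq:csq1}) and (\ref{eq:csq2}) divided by $2^c$ and $2^d$ --- and then eliminate, using that $x_1+2^{c-d}y$ is a unit, that $2^{c-1}\equiv 0\pmod{2^{a-c}}$, and that $\|(1+2^{d-1})^2r-1\|\ge d$. The only organizational difference is that you solve $(\ast\ast)$ for $y_2-1$ and substitute into $(\ast)$ to isolate $x_1$ first, whereas the paper forms the product $(x_1+2^{c-d}y)(y_2-1-2^{c-d}y)$ to isolate $y_2$ first; the numerical identities you invoke all check out.

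There is, however, one step that is not justified as written. You pass from (\ref{eq:condition3}) to $(\ast\ast)$ via ``$r^{y_2}-r\equiv 2^d(1+2^{d-1})r(y_2-1)\pmod{2^a}$,'' but Lemma \ref{lem:simplify} only delivers this congruence modulo $2^{2d+\|y_2-1\|}$, which is weaker than modulo $2^a$ unless $\|y_2-1\|\ge a-2d$; for a general odd $y_2$ the claimed congruence can fail. In the ``if'' direction the needed bound follows from (\ref{eq:y12}), but in the ``only if'' direction it must first be extracted from the hypothesis: from $2^byx_2+(r-r^{y_2})x_1+2^cy\equiv 0$ and $2\nmid x_1$ one gets $d+\|y_2-1\|=\|r^{y_2-1}-1\|\ge\min\{b,c\}=c\ge a-d$, hence $\|y_2-1\|\ge c-d\ge a-2d$. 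This is precisely the paper's intermediate step (\ref{eq:d(y2-1)}), and without it the equivalence of (\ref{eq:condition3}) with $(\ast\ast)$ --- on which the whole converse direction rests --- is not established. The gap is easily filled, but it must be filled before your elimination argument can start.
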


\begin{proof}
Now that $\|y_{1}\|\geq b-d>c-d\ge a-2d$, applying Lemma \ref{lem:simplify}, we obtain
$r^{y_{1}}\equiv 1+2^{b}y$, $[r]_{r^{y_{1}}}\equiv r$, $[2^{c}]_{r^{y_{1}}}\equiv 2^{c}$, $[2^{b}]_{r^{y_{2}}}\equiv 2^{b}$,
hence  (\ref{eq:condition2}) and (\ref{eq:condition3}) become
\begin{align}
2^{b}x_{2}+2^{c}(y_{2}-x_{1}-2^{c-d}y)&\equiv 0, \label{eq:csq1}   \\
2^{b}x_{2}y+(r-r^{y_{2}})x_{1}+2^{c}y&\equiv 0,  \label{eq:csq2}
\end{align}
respectively.
It follows from (\ref{eq:csq2}) that
\begin{align}
d+\|y_{2}-1\|=\|r^{y_2-1}-1\|\ge c\ge a-d;   \label{eq:d(y2-1)}
\end{align}
by Lemma \ref{lem:simplify}, $r^{y_2-1}-1\equiv 2^{d}(1+2^{d-1})(y_{2}-1)$, hence
$$r^{y_{2}}-r=(1+2^d)(r^{y_2-1}-1)\equiv (2^{d}+2^{2d-1})(y_{2}-1).$$
So (\ref{eq:csq2}) can be converted into
$(1+2^{d-1})(y_{2}-1)x_1\equiv (2^{b-d}x_{2}+2^{c-d})y\pmod{2^{a-d}};$
multiplying $1-2^{d-1}$ and using (\ref{eq:d(y2-1)}), we obtain
$$x_{1}(y_{2}-1)\equiv (2^{b-d}x_{2}+2^{c-d}-2^{c-1})y\pmod{2^{a-d}}.$$
Then
\begin{align*}
(x_{1}+2^{c-d}y)(y_{2}-1-2^{c-d}y)
&=x_{1}(y_{2}-1)+2^{c-d}(y_{2}-x_{1}-2^{c-d}y-1)y  \\
&\equiv (2^{b-d}x_{2}-2^{c-1}+2^{c-d}(y_{2}-x_{1}-2^{c-d}y))y  \pmod{2^{a-d}} \\
&\equiv -2^{c-1}y  \pmod{2^{a-d}},   
\end{align*}
where in the last line, (\ref{eq:csq1}) is used.
Thanks to $2\nmid x_{1}+2^{c-d}y$ and $c\ge a-d$, we can deduce (\ref{eq:y12}), then (\ref{eq:x12}) follows from (\ref{eq:y12}), (\ref{eq:csq1}) and the condition $c-1\ge d\ge a-c$.

Conversely, it can be verified that (\ref{eq:x12}), (\ref{eq:y12}) indeed imply (\ref{eq:csq1}), (\ref{eq:csq2}).
\end{proof}

\begin{lem}  \label{lem:deduce2}
When $H=H_{II}(a,b,e)$, the conditions {\rm(\ref{eq:condition2})} and {\rm(\ref{eq:condition3})} hold if and only if one of the following occurs:
\begin{itemize}
  \item $2\mid y$ and $y_{2}\equiv 1\pmod{2^{a-e}}$;
  \item $e\le a-2$, $2\nmid y$ and $\|y_{2}-1\|=a-e-1$.
\end{itemize}
\end{lem}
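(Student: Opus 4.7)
The plan is to exploit the high 2-adic valuation of $y_1 = 2^{b-1}y$, which follows from $d = 1$ and $b \ge e+1 \ge 3$ (since (\ref{ineq:exceptional}) forces $e \ge 2$, $e < b$, and $e + b \ge a + 1$). First, I would apply Lemma \ref{lem:simplify} with $s = r^2$, using $\|r^2 - 1\| = e + 1$ (from $r^2 - 1 = 2^{e+1}(2^{e-1} - 1)$) and $x = y_1/2 = 2^{b-2}y$, to obtain $\|r^{y_1} - 1\| \ge e + b - 1 \ge a$. Thus $r^{y_1} \equiv 1 \pmod{2^a}$, which immediately gives $[r; r^{y_1}] \equiv r$ and $[2^{a-1}; r^{y_1}] \equiv 2^{a-1}$ modulo $2^a$. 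Since $a \ge 3$, the term $\frac{2^cy_1}{2^b}\cdot 2^c = 2^{2a-3}y$ in (\ref{eq:condition2}) vanishes mod $2^a$, and $\frac{(r-1)y_1}{2^b}\cdot 2^c$ in (\ref{eq:condition3}) reduces to $(2^{e-1} - 1)y \cdot 2^{a-1}$.

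Next, I would show (\ref{eq:condition2}) is automatic, given that $x_1, y_2$ are both odd (as forced by (\ref{eq:condition0}) together with $y_1$ being even). After the preceding reductions it reads $x_2[2^b; r^{y_2}] + 2^{a-1}(y_2 - x_1) \equiv 0 \pmod{2^a}$; the second summand vanishes since $y_2 - x_1$ is even. For the first, I would use $[2^b; r^{y_2}] = (1 + r^{y_2})[2^{b-1}; r^{2y_2}]$: from $1 + r = 2^e$ and $r^{y_2} \equiv r \pmod{2^{e+1}}$ (since $y_2 - 1$ is even), we get $\|1 + r^{y_2}\| = e$; and Lemma \ref{lem:simplify} applied to $r^{2y_2}$ (with $\|r^{2y_2} - 1\| = e + 1$) and $x = 2^{b-1}$ gives $\|[2^{b-1}; r^{2y_2}]\| = b - 1$. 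Hence $\|[2^b; r^{y_2}]\| \ge e + b - 1 \ge a$, as required.

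The content of the lemma lies in (\ref{eq:condition3}), which now reads $(r - r^{y_2})x_1 + (2^{e-1} - 1)y \cdot 2^{a-1} \equiv 0 \pmod{2^a}$. Setting $j = \|y_2 - 1\| \ge 1$, I would factor $r - r^{y_2} = -r(r^{y_2-1} - 1)$, express $r^{y_2 - 1} = (r^2)^{(y_2-1)/2}$, and apply Lemma \ref{lem:simplify} (or a direct termwise summation when $j = 1$) to conclude $\|r - r^{y_2}\| = e + j$. A case split on the parity of $y$ then finishes the proof: for $y$ even, the second summand vanishes mod $2^a$, forcing $e + j \ge a$, i.e.\ $y_2 \equiv 1 \pmod{2^{a-e}}$; for $y$ odd, each summand has valuation $\ge a - 1$, and the only way the sum vanishes mod $2^a$ is that both attain valuation exactly $a - 1$ (so that $2^{a-1}\cdot\text{odd} + 2^{a-1}\cdot\text{odd} \equiv 2^a \equiv 0$), which forces $e + j = a - 1$, i.e.\ $\|y_2 - 1\| = a - e - 1$ and hence $e \le a - 2$. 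The main obstacle I expect is the bookkeeping of valuations at the boundary $j = 1$, where Lemma \ref{lem:simplify}'s hypothesis $u > 0$ fails and a direct summation must replace it.
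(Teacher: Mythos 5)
Your proposal is correct and follows essentially the same route as the paper: apply Lemma \ref{lem:simplify} to $s=r^{2}$ to get $r^{y_1}\equiv 1$ and $\|[2^b;r^{y_2}]\|\ge e+b-1\ge a$, observe that (\ref{eq:condition2}) then holds automatically, and reduce (\ref{eq:condition3}) to the valuation condition $\|r^{y_2-1}-1\|=e+\|y_2-1\|$ matched against $2^{a-1}y$ by parity of $y$. The only differences are cosmetic (you factor $[2^b;r^{y_2}]=(1+r^{y_2})[2^{b-1};r^{2y_2}]$ and keep $x_1$ explicit where the paper divides it out), and your flagged boundary case $\|y_2-1\|=1$ is handled by the direct summation you describe.
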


\begin{proof}
For any $z$ with $\|z\|=u\ge 1$, applying Lemma \ref{lem:simplify} to $s=r^{2}$, $x=z$, we obtain
\begin{align}
[2z;r]=(1+r)[z;r^{2}]\equiv 2^{e}(1+2^{e})z\equiv 2^{e}z&\pmod{2^{e+u+1}}, \nonumber \\
r^{2z}-1=(r^2)^z-1\equiv (r^2-1+2^{2e+1})z&\pmod{2^{2e+2+u}}; \label{eq:simplify2}
\end{align}
these are still true when $r$ is replaced by $r^{y_{2}}$, as $2\nmid y_{2}$ so that $\|r^{y_{2}}\pm 1\|=\|r\pm 1\|$.
In particular,
\begin{align}
[2^{b};r^{y_{2}}]\equiv 2^{e+b-1}\equiv 0, \qquad  r^{y_{1}}-1\equiv 0. \label{eq:deduce2}
\end{align}

Note that $2^{2c-b}y_{1}\equiv 2^{c}(y_{2}-x_{1})\equiv 0$, hence (\ref{eq:condition2}) holds for free.
The condition (\ref{eq:condition3}) becomes
$r^{y_{2}-1}-1\equiv 2^{a-1}y$, which is, by (\ref{eq:simplify2}), equivalent to
$$e+\|y_{2}-1\|\ge a \quad \text{if} \quad 2\mid y, \qquad e+\|y_{2}-1\|=a-1 \quad \text{if} \quad  2\nmid y;$$
by (\ref{eq:condition0}), the second possibility occurs only when $e\le a-2$.
\end{proof}

\section{The structure of automorphism group}

\subsection{$H=H_I(a,b,c,d)$}

Let $\Xi$ denote the set of $(x_1,x_2)\in\mathbb{Z}_{2^a}\times\mathbb{Z}_{2^a}$ satisfying (\ref{eq:x12}), and let $\Omega$ denote the set of $(y,y_2)\in\mathbb{Z}_{2^{a+d-c}}\times\mathbb{Z}_{2^{a+b-c}}$ satisfying (\ref{eq:y12}).
By Lemma \ref{lem:simplify} and (\ref{ineq:ordinary}), we have
$$r^{2^{b-d}y}=(1+2^d)^{2^{b-d}y}\equiv 1+2^by.$$
Thus each automorphism of $H_I(a,b,c,d)$ can be expressed as
\begin{align}
\sigma_{x_{1},x_{2};y,y_{2}}:\ \alpha^{u}\beta^{v}\mapsto \exp_{\alpha}(x_{1}[u;1+2^by]+(1+2^byu)x_{2}[v;r^{y_{2}}])\exp_{\beta}(2^{b-d}yu+y_{2}v)
\end{align}
for some quadruple $(x_{1},x_{2},y,y_{2})\in\Xi\times\Omega$, and $\sigma_{x_{1},x_{2};y,y_{2}}=\sigma_{x'_{1},x'_{2};y',y'_{2}}$
if and only if
\begin{align}
y'\equiv y\pmod{2^d}, \qquad  x'_1-x_1+2^{c-d}(y'-y)\equiv 0,  \label{eq:auto1=1} \\
y'_2\equiv y_2\pmod{2^b}, \qquad x'_2-x_2+\frac{y'_2-y_2}{2^b}2^c\equiv 0. \label{eq:auto1=2}
\end{align}

Let
\begin{align}
\phi_{x_1,x_2}=\sigma_{x_1,x_2;0,1}, \qquad  \psi_{y,y_2}=\sigma_{1,0;y,y_2}.
\end{align}
The following can be verified using (\ref{eq:identity1}), (\ref{eq:identity2}):
\begin{align}
\phi_{x'_1,x'_2}\circ\phi_{x_1,x_2}&=\phi_{x'_1x_1,x'_1x_2+x'_2}, \\
\psi_{y',y'_2}\circ\psi_{y,y_2}&=\psi_{y'+y'_2y,y'_2y_2}, \\
\phi_{x_1,x_2}\circ\psi_{y,y_2}&=\sigma_{x_1+x_2[2^{b-d}y;r],x_2[y_2;r];y,y_2}, \\
\psi_{y,y_2}\circ\phi_{x_1,x_2}&=\sigma_{[x_1;r'],[x_2;r'];yx_1,2^{b-d}yx_2+y_2}, \qquad \text{with} \quad r'=1+2^by.
\end{align}

Let
\begin{align}
X=\{\phi_{x_1,x_2}\colon (x_1,x_2)\in\Xi\}, \qquad Y=\{\psi_{y_1,y_2}\colon (y,y_2)\in\Omega\}.
\end{align}
Then $X,Y$ are subgroups of ${\rm Aut}(H)$,  
and there is a decomposition
\begin{align}
&\sigma_{x_1,x_2;y,y_2}=\phi_{\tilde{x}_1,\tilde{x}_2}\circ\psi_{y,y_2}, \\
\text{with} \qquad  &\tilde{x}_1=x_1-x_2[2^{b-d}y;r][y_2;r]^{-1}, \qquad \tilde{x}_2=x_2[y_2;r]^{-1}.  \label{eq:decomposition}
\end{align}
As special cases of (\ref{eq:auto1=1}) and (\ref{eq:auto1=2}), $\phi_{x_1,x_2}=\psi_{y,y_2}$ if and only if 
\begin{align}
\|y\|\ge d, \qquad \|y_2-1\|\ge b, \qquad x_1\equiv 1+2^{c-d}y, \qquad x_2\equiv\frac{y_2-1}{2^b}2^c.
\end{align}
It follows that
\begin{align}
X\cap Y=\langle\phi_{1+2^c,0},\phi_{1,2^c}\rangle=\langle\psi_{2^d,1},\psi_{0,1+2^b}\rangle\cong\mathbb{Z}_{2^{a-c}}\times\mathbb{Z}_{2^{a-c}}.
\end{align}

Let
\begin{align}
f=\min\{a,b\}, \qquad z=\begin{cases} -1, & b<a, \\ 0, &b\ge a,\end{cases} \qquad w=-(1+2^{d-1})^{-1}, \\
\phi_0=\phi_{1,2^{a-f}}, \qquad \phi_1=\phi_{1-2^{f-c},z}, \qquad \psi_0=\psi_{2^{a-c},1}, \qquad \psi_1=\psi_{w,1-2^{c-d}}.
\end{align}

Consider the homomorphism
$\rho:X\to\mathbb{Z}_{2^a}$ sending $\phi_{x_1,x_2}$ to $x_1$. It is easy to see that
\begin{align}
\ker(\rho)&=\{\phi_{1,x_2}\colon\|x_2\|\ge a-f\}=\langle\phi_{0}\rangle\cong\mathbb{Z}_{2^{f}}, \\
{\rm Im}(\rho)&=\{x\colon \|x-1\|\ge f-c\}\le\mathbb{Z}_{2^a}^\times.
\end{align}
If $f\ge c+2$, then
${\rm Im}(\rho)=\langle 1-2^{f-c}\rangle\cong\mathbb{Z}_{2^{a+c-f}}$, hence
\begin{align}
X=\langle\phi_0,\phi_1\mid \phi_0^{2^{f}}, \phi_1^{2^{a+c-f}},\phi_1\phi_0\phi_1^{-1}\phi_0^{2^{f-c}-1}\rangle&\cong\mathbb{Z}_{2^f}\rtimes\mathbb{Z}_{2^{a+c-f}};
\end{align}
if $f=c+1$, then ${\rm Im}(\rho)=\mathbb{Z}_{2^a}^\times=\langle-1,5\rangle\cong\mathbb{Z}_2\times\mathbb{Z}_{2^{a-2}}$ by Theorem 2' on Page 43 of \cite{IR90}. Hence
\begin{align}
X=\langle\phi_0,\phi_1,\phi_2\mid \phi_0^{2^f}, \phi_1^2, \phi_2^{2^{a-2}}, (\phi_1\phi_0)^2, \phi_2\phi_0\phi_2^{-1}\phi_0^{-5},[\phi_1,\phi_2]\rangle&\cong\mathbb{Z}_{2^f}\rtimes(\mathbb{Z}_2\times\mathbb{Z}_{2^{a-2}}), \\
\text{with} \qquad  \phi_2&=\phi_{5,-2z}.
\end{align}

Consider the homomorphism
$\mu:Y\to\mathbb{Z}_{2^{b+a-c}}$ sending $\psi_{y_1,y_2}$ to $y_2$. Clearly
\begin{align}
\ker(\mu)&=\{\psi_{y,1}\colon y\in\mathbb{Z}_{2^{a+d-c}}\colon \|y\|\ge a-c\}=\langle\psi_{0}\rangle\cong\mathbb{Z}_{2^{d}}, \\
{\rm Im}(\mu)&=\{y_2\colon \|y_2-1\|\ge c-d\}\le\mathbb{Z}_{2^{a+b-c}}^\times.
\end{align}
If $c\ge d+2$, then ${\rm Im}(\mu)=\langle 1-2^{c-d}\rangle\cong\mathbb{Z}_{2^{a+b+d-2c}}$, hence
\begin{align}
Y=\langle\psi_0,\psi_1\mid \psi_0^{2^d}, \psi_1^{2^{a+b-d-2c}}, \psi_1\psi_0\psi_1^{-1}\phi_0^{2^{c-d}-1}\rangle\cong\mathbb{Z}_{2^{d}}\rtimes\mathbb{Z}_{2^{a+b+d-2c}};
\end{align}
if $c=d+1$, then ${\rm Im}(\mu)=\mathbb{Z}_{2^{b+a-c}}^\times=\langle -1,5\rangle\cong\mathbb{Z}_2\times\mathbb{Z}_{2^{a+b-c-2}}$, hence
\begin{align}
Y=\langle\psi_{0},\psi_1,\psi_2\mid \psi_0^{2^d}, \psi_1^{2}, \psi_2^{2^{a+b-c-2}}, (\psi_1\psi_0)^2,\psi_2\psi_0\psi_2^{-1}\psi_0^{-5},[\psi_1,\psi_2]\rangle
&\cong\mathbb{Z}_{2^{d}}\rtimes(\mathbb{Z}_2\times\mathbb{Z}_{2^{a+b-c-2}}), \\
\text{with} \qquad \psi_2&=\psi_{-2w,5}.
\end{align}

The above can be summarized as
\begin{thm}
For $H=H_I(a,b,c,d)$, we have ${\rm Aut}(H)=XY$ such that 
\begin{align*}
X&\cong\begin{cases} \mathbb{Z}_{2^f}\rtimes\mathbb{Z}_{2^{a+c-f}}, &f\ge c+2, \\ \mathbb{Z}_{2^f}\rtimes(\mathbb{Z}_2\times\mathbb{Z}_{2^{a-2}}), &f=c+1, \end{cases} \\
Y&\cong\begin{cases} \mathbb{Z}_{2^d}\rtimes\mathbb{Z}_{2^{a+b+d-2c}}, &c\ge d+2, \\ \mathbb{Z}_{2^d}\rtimes(\mathbb{Z}_2\times\mathbb{Z}_{2^{a+b-c-2}}), &c=d+1, \end{cases} \\ 
X\cap Y&\cong\mathbb{Z}_{2^{a-c}}\times\mathbb{Z}_{2^{a-c}}.
\end{align*}
Consequently, the order of ${\rm Aut}(H)$ is $2^{b+c+2d}$. 
\end{thm}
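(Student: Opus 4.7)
The plan is to read off each assertion from the machinery already assembled above, so the proof reduces to four checks:
(i) every automorphism lies in $XY$,
(ii) the intersection $X\cap Y$ is as claimed,
(iii) the semidirect-product descriptions of $X$ and $Y$ are correct,
(iv) the order count comes out to $2^{b+c+2d}$.

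For (i), I would use the displayed decomposition
$\sigma_{x_{1},x_{2};y,y_{2}}=\phi_{\tilde x_{1},\tilde x_{2}}\circ\psi_{y,y_{2}}$
defined by \eqref{eq:decomposition}. The only thing to check is that $[y_{2};r]$ is a unit mod $2^{a}$, which is immediate since $y_{2}$ is odd by \eqref{eq:condition0} and every $r^{i}$ is odd, so the sum of an odd number of odd terms is odd. For (ii), the equality criteria \eqref{eq:auto1=1}--\eqref{eq:auto1=2} specialized to $\phi_{x_{1},x_{2}}=\psi_{y,y_{2}}$ give exactly the four conditions stated just before $X\cap Y$ is computed, and the resulting two cyclic generators of order $2^{a-c}$ each produce the claimed $\mathbb{Z}_{2^{a-c}}\times\mathbb{Z}_{2^{a-c}}$.

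For (iii), I would use the homomorphisms $\rho:X\to\mathbb{Z}_{2^{a}}^{\times}$, $\phi_{x_{1},x_{2}}\mapsto x_{1}$, and $\mu:Y\to\mathbb{Z}_{2^{a+b-c}}^{\times}$, $\psi_{y,y_{2}}\mapsto y_{2}$. The kernel descriptions follow directly from \eqref{eq:x12} and \eqref{eq:y12}. For the images I would invoke the standard fact (used in the excerpt via \cite{IR90}) that the subgroup $\{1+2^{k}\mathbb{Z}_{2^{N}}\}\le\mathbb{Z}_{2^{N}}^{\times}$ is cyclic of order $2^{N-k}$ when $k\ge 2$, but equals $\mathbb{Z}_{2^{N}}^{\times}\cong\mathbb{Z}_{2}\times\mathbb{Z}_{2^{N-2}}$ when $k=1$. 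This gives the cyclic image in the generic cases $f\ge c+2$ and $c\ge d+2$, and the non-cyclic image when $f=c+1$ or $c=d+1$. It then remains to check that the generators $\phi_{0},\phi_{1}$ (resp.\ $\psi_{0},\psi_{1}$; plus $\phi_{2}$ or $\psi_{2}$ in the non-cyclic case) satisfy the stated relations, which is a direct computation using the composition formulas listed just after $\phi$ and $\psi$ are introduced; e.g.\ a short calculation gives $\phi_{1}\phi_{0}\phi_{1}^{-1}=\phi_{0}^{\,1-2^{f-c}}$, matching the displayed relation. In particular each extension splits because the chosen lifts $\phi_{1}$ and $\psi_{1}$ have the correct orders in $X$ and $Y$ respectively.

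For (iv), both cases of $X$ give $|X|=2^{a+c}$ and both cases of $Y$ give $|Y|=2^{a+b+2d-2c}$, while $|X\cap Y|=2^{2(a-c)}$, so
\[
|\mathrm{Aut}(H)|=\frac{|X|\,|Y|}{|X\cap Y|}=\frac{2^{a+c}\cdot 2^{a+b+2d-2c}}{2^{2(a-c)}}=2^{b+c+2d}.
\]
The only place where I expect real care is the relation-checking for the semidirect products in the exceptional cases $f=c+1$ and $c=d+1$; there the presence of the order-$2$ factor in the unit group means one must verify that the chosen generator $\phi_{2}=\phi_{5,-2z}$ (resp.\ $\psi_{2}=\psi_{-2w,5}$) actually has order $2^{a-2}$ modulo $\langle\phi_{0}\rangle$ and commutes with $\phi_{1}$ modulo $\langle\phi_{0}\rangle$, which amounts to a careful 2-adic accounting through Lemma~\ref{lem:simplify}. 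Everything else is bookkeeping.
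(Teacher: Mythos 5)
Your proposal follows essentially the same route as the paper: the theorem there is explicitly a summary of the preceding computations, namely the decomposition $\sigma_{x_1,x_2;y,y_2}=\phi_{\tilde x_1,\tilde x_2}\circ\psi_{y,y_2}$ giving ${\rm Aut}(H)=XY$, the equality criteria giving $X\cap Y\cong\mathbb{Z}_{2^{a-c}}\times\mathbb{Z}_{2^{a-c}}$, the homomorphisms $\rho$ and $\mu$ with the kernel/image analysis (including the case split on whether the image of units is cyclic, via \cite{IR90}), and the order count $|X||Y|/|X\cap Y|$. Your outline and arithmetic are correct and you rightly identify the relation-checking in the exceptional cases $f=c+1$ and $c=d+1$ as the only point needing real care.
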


\begin{rmk}
\rm In principle, we are able to obtain a presentation for ${\rm Aut}(H)$. The generators are those of $X,Y$, and the relator set can be divided into $R_X\cup R_Y\cup R_{X,Y}\cup R'$, where
\begin{itemize}
  \item $R_X$ (resp. $R_Y$) consists of relators in the presentations for $X$ (resp. $Y$), 
  \item $R_{X,Y}$ consists of two elements corresponding to the two generators of $X\cap Y$, i.e., $\phi_{1+2^c,0}=\psi_{2^d,1}$ and $\phi_{1,2^c}=\psi_{0,1+2^b}$,
  \item relators in $R'$ have the form $\vartheta_j\theta_i=\gamma\delta$, where $\vartheta_j$ (resp. $\theta_i$) is a generator for $Y$ (resp. $X$), and $\gamma$ (resp. $\delta$) is a product of generators for $X$ (resp. $Y$).
\end{itemize}
As an example of element in $R'$, 
$$\psi_0\circ\phi_0=\sigma_{1,2^{a-f};2^{a-c},h}=\phi_{h^{-1},2^{a-f}h^{-1}}\circ\psi_{2^{a-c},h}, \qquad \text{with} \qquad h=1+2^{2a+b-c-d-f};$$
one can further write $\phi_{h^{-1},2^{a-f}h^{-1}}$ (resp. $\psi_{2^{a-c},h}$) as a product of generators for $X$ (resp. $Y$).

However, the computations are so complicated that we choose not to write down explicitly.
\end{rmk}

\subsection{$H=H_{II}(a,b,e)$}

Recall that $r=2^e-1$, $c=a-1$ and $d=1$.

Let $\Xi$ denote the set of $(x_1,x_2)\in\mathbb{Z}_{2^a}\times\mathbb{Z}_{2^a}$ with $2\nmid x_1$, and let $\Omega$ denote the set of $(y,y_2)\in\mathbb{Z}_{4}\times\mathbb{Z}_{2^{b+1}}$ satisfying the conditions in Lemma \ref{lem:deduce2}.
Each automorphism of $H_{II}(a,b,e)$ can be expressed as (recalling $r^{y_1}\equiv 1$, as in (\ref{eq:deduce2}))
\begin{align}
\sigma_{x_{1},x_{2};y,y_{2}}:\ \alpha^{u}\beta^{v}\mapsto \exp_{\alpha}(x_{1}u+x_{2}[v;r^{y_{2}}])\exp_{\beta}(2^{b-1}yu+y_{2}v)
\end{align}
for some quadruple $(x_{1},x_{2},y,y_{2})\in\Xi\times\Omega$, and $\sigma_{x_{1},x_{2};y,y_{2}}=\sigma_{x'_{1},x'_{2};y',y'_{2}}$ if and only if
\begin{align}
y'\equiv y\pmod{2}, \qquad x'_1-x_1+2^{a-2}(y'-y)\equiv 0, \label{eq:auto2=1} \\
y'_2\equiv y_2\pmod{2^b}, \qquad x'_2-x_2+\frac{y'_2-y_2}{2^b}2^{a-1}\equiv 0. \label{eq:auto2=2}
\end{align}

Let
\begin{align}
\phi_{x_1,x_2}=\sigma_{x_1,x_2;0,1}, \qquad \psi_{y,y_2}=\sigma_{1,0;y,y_2}.
\end{align}
We have
\begin{align}
\phi_{x'_1,x'_2}\circ\phi_{x_1,x_2}&=\phi_{x'_1x_1,x'_1x_2+x'_2}, \\
\psi_{y',y'_2}\circ\psi_{y,y_2}&=\psi_{y'+y'_2y,y'_2y_2}, \\
\phi_{x_1,x_2}\circ\psi_{y,y_2}&=\sigma_{x_1+x_2[2^{b-1}y;r],x_2[y_2;r];y,y_2}, \\
\psi_{y,y_2}\circ\phi_{x_1,x_2}&=\sigma_{x_1,x_2;yx_1,2^{b-1}yx_2+y_2}.
\end{align}
Let
\begin{align}
X=\{\phi_{x_1,x_2}\colon(x_1,x_2)\in\Xi\}, \qquad Y=\{\psi_{y,y_2}\colon(y,y_2)\in\Omega\}.
\end{align}
Thus $X,Y$ are subgroups of ${\rm Aut}(H)$, and there is a decomposition
\begin{align}
&\sigma_{x_1,x_2;y,y_2}=\phi_{\tilde{x}_1,\tilde{x}_2}\circ\psi_{y,y_2}, \\
\text{with} \qquad  &\tilde{x}_1=x_1-x_2[2^{b-1}y;r][y_2;r]^{-1}, \qquad \tilde{x}_2=x_2[y_2;r]^{-1}.  \label{eq:decomposition}
\end{align}
As special cases of (\ref{eq:auto2=1}), (\ref{eq:auto2=2}), $\phi_{x_1,x_2}=\psi_{y,y_2}$ if and only if 
\begin{align}
y\in\{0,2\}, \qquad y_2\in\{1,1+2^b\}, \qquad x_1\equiv 1+2^{a-2}y, \qquad x_2\equiv\frac{y_2-1}{2^b}2^{a-1}.
\end{align}
Hence
\begin{align}
X\cap Y=\{\psi_{0,1},\psi_{0,1+2^b},\psi_{2,1},\psi_{2,1+2^b}\}\cong\mathbb{Z}_2\times\mathbb{Z}_2.
\end{align}

Let
\begin{align}
\phi_0=\phi_{1,1}, \qquad \phi_1=\phi_{-1,0}, \qquad \phi_2=\phi_{5,0}, \qquad \psi_0=\psi_{2,1}, \qquad \psi_1=\psi_{1,1-2^{a-e-1}}.
\end{align}

Similarly as in the previous subsection, we can obtain
\begin{align}
X=\langle\phi_0,\phi_1,\phi_2\mid \phi_0^{2^a}, \phi_1^2, \phi_2^{2^{a-2}}, \phi_1\phi_0\phi_1^{-1}\phi_0,\phi_2\phi_0\phi_2^{-1}\phi_0^{-5},[\phi_1,\phi_2]\rangle
\cong\mathbb{Z}_{2^{a}}\rtimes(\mathbb{Z}_2\times\mathbb{Z}_{2^{a-2}}).
\end{align}

If $e\le a-3$, then
\begin{align}
Y=\langle\psi_0,\psi_1\mid \psi_0^2, \psi_1^{2^{b+e-a+2}}, [\psi_0,\psi_1]\rangle\cong\mathbb{Z}_{2}\times\mathbb{Z}_{2^{b+e-a+2}}.
\end{align}

If $e=a-2$, then
\begin{align}
Y=\langle\psi_1,\psi_2\mid \psi_1^2, \psi_2^{2^{b-1}},[[\psi_1,\psi_2],\psi_1],[[\psi_1,\psi_2],\psi_2],[\psi_1,\psi_2]^2\rangle, \qquad 
\text{with} \qquad \psi_2=\psi_{1,5}.
\end{align}
In the notations of \cite{Br82} Chapter IV, Section 3, choose a map 
$s:\mathbb{Z}_2\times\mathbb{Z}_{2^{b-1}}\to Y$, $(u,v)\mapsto \psi_1^u\psi_2^v$, so that $s(0,0)=1$, then it can be computed that
$$s(u_1,v_1)s(u_2,v_2)s(u_1+u_2,v_1+v_2)^{-1}=[\psi_1,\psi_2]^{u_2v_1}.$$
Hence $Y$ is isomorphic to the central extension of $\mathbb{Z}_2\times\mathbb{Z}_{2^{b-1}}$ by $\mathbb{Z}_2$ determined by the 2-cocycle
\begin{align}
f:(\mathbb{Z}_2\times\mathbb{Z}_{2^{b-1}})^2\to\mathbb{Z}_2, \qquad (u_1,v_1,u_2,v_2)\to u_2v_1;
\end{align}
denote this group by $E_f$.

If $e=a-1$, then we find directly that
\begin{align}
Y=\langle \psi_0,\tilde{\psi}_1,\tilde{\phi}_2\mid \psi_0^2, \tilde{\psi}_1^2,\tilde{\psi}_2^{2^{b-1}}, [\psi_0,\tilde{\psi}_1], [\psi_0,\tilde{\psi}_2], [\tilde{\psi}_1,\tilde{\psi}_2]\rangle&\cong \mathbb{Z}_2\times\mathbb{Z}_2\times\mathbb{Z}_{2^{b-1}}, \\
\text{with} \qquad \tilde{\psi}_1=\psi_{0,-1}, \qquad \tilde{\psi}_2&=\psi_{0,5}.
\end{align}

\begin{thm}
For $H=H_{II}(a,b,e)$, we have ${\rm Aut}(H)=XY$ such that
\begin{align*}
X&\cong \mathbb{Z}_{2^a}\rtimes(\mathbb{Z}_2\times\mathbb{Z}_{2^{a-2}}), \\
Y&\cong\begin{cases} \mathbb{Z}_{2}\times\mathbb{Z}_{2^{b+e-a+2}}, &e\le a-3, \\ 
E_f, &e=a-2, \\
\mathbb{Z}_2\times\mathbb{Z}_2\times\mathbb{Z}_{2^{b-1}}, &e=a-1, \end{cases} \\
X\cap Y&\cong\mathbb{Z}_{2}\times\mathbb{Z}_{2}.
\end{align*}
Consequently, the order of ${\rm Aut}(H)$ is $2^{a+b+\min\{a-2,e\}}$.
\end{thm}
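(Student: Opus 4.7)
The plan is to assemble the descriptions of $X$, $Y$, and $X\cap Y$ obtained in the preceding paragraphs and then derive the order of ${\rm Aut}(H)$ via the standard product formula $|{\rm Aut}(H)|=|X|\,|Y|/|X\cap Y|$. That ${\rm Aut}(H)=XY$ follows directly from the decomposition (\ref{eq:decomposition}); the only point to check is that $[y_2;r]$ is invertible in $\mathbb{Z}_{2^a}$, which holds because $r$ is odd and $y_2$ is odd in every case allowed by Lemma \ref{lem:deduce2}, so $[y_2;r]\equiv y_2\pmod 2$ is a unit.

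For $X$, the composition rule $\phi_{x'_1,x'_2}\circ\phi_{x_1,x_2}=\phi_{x'_1x_1,\,x'_1x_2+x'_2}$ identifies $X$ with the affine group $\mathbb{Z}_{2^a}\rtimes\mathbb{Z}_{2^a}^\times$. Using the isomorphism $\mathbb{Z}_{2^a}^\times=\langle-1\rangle\times\langle 5\rangle\cong\mathbb{Z}_2\times\mathbb{Z}_{2^{a-2}}$, the proposed presentation on $\phi_0,\phi_1,\phi_2$ reduces to a handful of one-line verifications via the composition rule: $\phi_0^{2^a}=\phi_1^2=\phi_2^{2^{a-2}}=1$, $\phi_1\phi_0\phi_1^{-1}=\phi_0^{-1}$, $\phi_2\phi_0\phi_2^{-1}=\phi_0^5$, and $[\phi_1,\phi_2]=1$. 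For $X\cap Y$, the four elements $\psi_{0,1},\psi_{0,1+2^b},\psi_{2,1},\psi_{2,1+2^b}$ isolated from the identification conditions each have order dividing $2$ and are independent, giving a Klein four-group.

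The substantive work is identifying $Y$ in each range of $e$. When $e\le a-3$, the element $1-2^{a-e-1}$ satisfies $\|1-2^{a-e-1}-1\|=a-e-1\ge 2$, so it has order $2^{b+e-a+2}$ in $\mathbb{Z}_{2^{b+1}}^\times$; a direct application of the composition rule shows $\psi_0$ and $\psi_1$ commute, yielding $Y\cong\mathbb{Z}_2\times\mathbb{Z}_{2^{b+e-a+2}}$. When $e=a-1$, only the first alternative of Lemma \ref{lem:deduce2} is available, so $y$ is even and $y_2$ is odd; the three generators $\psi_0,\tilde\psi_1,\tilde\psi_2$ commute pairwise and realize $Y$ as the internal direct product $\mathbb{Z}_2\times\mathbb{Z}_2\times\mathbb{Z}_{2^{b-1}}$. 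The delicate case is $e=a-2$: choosing the section $s(u,v)=\psi_1^u\psi_2^v$, the strategy is to compute $s(u_1,v_1)s(u_2,v_2)s(u_1+u_2,v_1+v_2)^{-1}$ via the composition rule and verify that, after reducing the $\mathbb{Z}_4$-valued first coordinate, it collapses to $[\psi_1,\psi_2]^{u_2v_1}$; the central-extension identification $Y\cong E_f$ then follows via \cite{Br82}. Finally, inserting $|X|=2^{2a-1}$, the three values of $|Y|$, and $|X\cap Y|=4$ into the product formula gives the uniform expression $|{\rm Aut}(H)|=2^{a+b+\min\{a-2,e\}}$. The main obstacle is the $e=a-2$ case, where the bookkeeping on the $\mathbb{Z}_4$-valued $y$-coordinate under iterated composition must be carried out carefully so that the cocycle $f$ is correctly extracted rather than accidentally trivialised.
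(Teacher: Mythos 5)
Your proposal is correct and follows essentially the same route as the paper: the theorem is obtained as a summary of the preceding computations of $X$, $Y$ (case by case in $e$, including the central-extension identification via the section $s(u,v)=\psi_1^u\psi_2^v$ when $e=a-2$) and $X\cap Y$, with ${\rm Aut}(H)=XY$ read off from the decomposition (\ref{eq:decomposition}) and the order from $|X|\,|Y|/|X\cap Y|$. Your order bookkeeping ($|X|=2^{2a-1}$, $|Y|=2^{b+e-a+3}$ or $2^{b+1}$, $|X\cap Y|=4$) matches the stated $2^{a+b+\min\{a-2,e\}}$ in all three cases.
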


\end{document}